
\documentclass[a4paper,reqno,11pt]{amsart}

\usepackage{amsmath}
\usepackage{amssymb}
\usepackage{amsthm}
\usepackage{a4wide}
\usepackage{enumerate}
\usepackage{url}

\newcommand{\R}{\mathbf{R}}
\newcommand{\Z}{\mathbf{Z}}
\newcommand{\Q}{\mathbf{Q}}
\newcommand{\C}{\mathbf{C}}
\newcommand{\Qbar}{\overline{\Q}}
\newcommand{\GQ}{\Gal(\Qbar/\Q)}
\DeclareMathOperator{\Gal}{Gal}
\DeclareMathOperator{\GL}{GL}
\DeclareMathOperator{\PGL}{PGL}
\DeclareMathOperator{\SL}{SL}
\DeclareMathOperator{\trace}{trace}

\DeclareMathOperator{\Frob}{Frob}
\DeclareMathOperator{\SO}{SO}
\DeclareMathOperator{\disc}{disc}
\DeclareMathOperator{\cond}{cond}

\theoremstyle{plain}
\newtheorem{theorem}{Theorem}
\newtheorem{lemma}[theorem]{Lemma}

\theoremstyle{remark}


\begin{document}

\title[A computation of modular forms of weight one and small level]{A computation of modular forms of \\ weight one and small level}

\begin{abstract}
We report on a computation of holomorphic cuspidal modular forms of weight one and small level (currently
level at most $1500$) and classification of them according to the projective image of their attached Artin
representations. The data we have gathered, such as Fourier expansions and projective images of Hecke newforms
and dimensions of space of forms, is available in both Magma and \texttt{Sage} readable formats on a webpage
created in support of this ongoing project. We explain some of the novel aspects of these computations and what they
have uncovered.
\end{abstract}

\author{Kevin Buzzard}\email{k.buzzard@imperial.ac.uk}\address{Department of
Mathematics, Imperial College London, 180 Queen's Gate, London SW7 2AZ, England}
\author{Alan Lauder}\email{lauder@maths.ox.ac.uk}\address{Mathematical Institute, Woodstock Road, Oxford OX2 6GG, England}
\maketitle

\section{Introduction}

The theory and practice of computing weight one modular forms has typically lagged behind that of computing higher weight forms. This is mainly because forms of higher weight are cohomological and there is a direct method for computing them using modular symbols. Indeed, the computer algebra package Magma~\cite{magma} has been able to compute forms of weight two or more for over fifteen years now using modular symbols, and the free open source package \texttt{Sage}~\cite{sage} can also compute these forms.
(Custom code existed well before then: examples we know of are due to Cohen--Skoruppa--Zagier, Cremona, Gouv\^ea and Stein.) By contrast, no such direct method is known in weight one and there were no generally applicable algorithms until more recently --- the pioneering work of Buhler~\cite{buhler} and project coordinated by Frey~\cite{freyetal} were both focused on computing one or more specific spaces of forms, rather than on a systematic computation.

The first author adapted the methods of Buhler and Frey et al.\ so that they could be applied systematically, and reported on the details of the algorithm in~\cite{buzzard:wt1}. This code, which computed bases of spaces of modular forms of weight one
and arbitrary Dirichlet character, was written in Magma and incorporated into the distributed version of the Magma package by Steve Donnelly. The authors have used this code (with some additions) to carry out a computation of the Hecke newforms in weight one for increasing level and all characters.
Computations have been completed for all levels up to $1500$ and the data obtained
is available in both Magma and \texttt{Sage} formats on a webpage which accompanies this paper \cite{weight1-page}. 
George Schaeffer informs the authors that he has implemented his more efficient ``Hecke stability method'' \cite{hecke-stability} for weight one in \texttt{Sage}, but only for quadratic character, and has computed such newforms up to level around~800. A Magma implementation of Schaeffer's algorithm for general character would be of great practical use in extending our tables of weight one modular 
forms (at present \texttt{Sage} seems a less suitable platform for carrying out such computations).

What do we mean by {\it computing weight one newforms}? For a given level~$N$ and odd character~$\chi$, we present
each cuspidal new eigenform $f \in S_1(N,\chi)$ as a truncated $q$-expansion $f(q)+{\mathcal O}(q^M)$ with Fourier coefficients in an 
explicit cyclotomic field containing the image of~$\chi$.
The $q$-adic precision $M$ is chosen so that there is a unique weight two form of level $N$ and trivial character whose
$q$-expansion is $E_1(1,\chi^{-1}) \cdot f(q) + {\mathcal O}(q^M)$, where $E_1(1,\chi^{-1})$ denotes
the Eisenstein series of weight one and characters $1$ and $\chi^{-1}$, thus ensuring further Fourier coefficients
can be easily computed if desired using modular symbols in weight two. On our webpage bases of spaces of modular forms
are given in the same manner, and we provide code which allows the user to compute the Fourier expansions
to arbitrary precision, as well as computations of $q$-expansions up to $O(q^{10000})$.

Having computed all newforms up to a given level, two natural questions for us to consider were what further computations can one do with this data, and how to make the data available to other researchers in an easily accessible manner. 

The main computation we did with the data was that for each cuspidal newform we rigorously computed whether the projective image of the associated Galois representation was a dihedral group or one of $A_4$, $S_4$ or $A_5$. This seemed like a natural question to ask and it needed, what was for us, a novel trick to answer.
Note that as a consequence we are able to determine the {\it smallest} level $N$ for which there exists a weight one modular form whose associated projective Galois representation has image~$A_5$. The level is $633$ and the Dirichlet character has order $10$.
The analogous questions for $A_4$ and $S_4$ were answered in~\cite{buzzard:wt1}, levels $124$ and $148$ with characters of order $6$ and $4$, respectively.
This level~633 icosahedral form does not seem to have been computed before -- Buhler's original icosahedral example had level $800$ and the first author in~\cite{buzzard:wt1} found an example with level~675. 
(The original motivation for the second author in carrying out such computations was
for a specific experimental application which required knowledge of the projective image. Such a classification of cuspidal newforms of small level in weight one was crucial in developing and numerical testing the conjectural, and occasionally provable, new constructions of points on elliptic curves and units
defined over dihedral, $A_4$, $S_4$ and $A_5$ number fields in \cite{DLR1,DLR2}. The $A_4$ form of level 124, and the $S_4$ form of level $148$ occur
in \cite[Examples 5.4 and 5.6]{DLR1}.)

One further computation which could be done with the data is to find the number field cut out by the projective Galois representation associated to each
cuspidal newform. The most straightforward way to answer this question for a given newform is to search in a pre-computed table of number fields. Once a number field has been found that one suspects is the right one, one can rigorously prove that it is by invoking the Artin conjecture, which is known in this situation thanks to the work of Khare and Wintenberger \cite{KW1}. For example,
the number field cut out by the projective Galois representation attached to the $A_5$ form in level $633$ is the splitting field of the polynomial $x^5 - 211x^2 - 1266x - 1899$.
We did not attempt to automate this process though. (An alternative analytic approach to finding candidate number fields, working directly from the Fourier expansions, is to
invoke Stark's conjecture \cite{Stark}. This may also be used to find the maximal real subfield of the field cut out by the Galois representation itself.)

We have made available on a webpage all of our data (on Fourier expansion of cuspidal newforms, dimensions and bases of spaces of cuspidal forms) in easily readable Magma and \texttt{Sage} format, along with accompanying code 
which allows the user to perform some further computations. This seemed to the authors the best way of making the
data accessible and useable for other researchers.

The remainder of this note is devoted to explaining how the projective image of each cuspidal newform was 
rigorously determined.

\section{The Galois representation associated to a weight~1 form.}

In this section we give a brief overview of the relationship between weight~1 modular forms and Galois representations. All of the material here is well-known, and our exposition is mostly to set up notation.

Let $N\geq1$ and $\chi$ be a Dirichlet character modulo~$N$. The space of weight~1 forms of level~$\Gamma_1(N)$ and character~$\chi$ is finite-dimensional over the complex numbers and will be denoted $S_1(N,\chi)$. Let $d(N,\chi)$ denote its dimension. If $K$ is a subfield of $\C$ containing $\Q(\chi)$, the field generated by the image of~$\chi$, then we write $S_1(N,\chi;K)$ for the $K$-vector space of forms in $S_1(N,\chi)$ whose $q$-expansions are in $K[[q]]$. The $K$-dimension of this space is $d(N,\chi)$. 

If $f=\sum_{n\geq1}a_nq^n\in S_1(N,\chi)$ then we say $f$ is a \emph{normalised eigenform} if $f$ is an eigenform for all the Hecke operators (including those at the bad primes) and $a_1 = 1$. In this case the eigenvalue of the Hecke operator $T_n$ is the complex number $a_n$. 

There is a theory of oldforms and newforms, which works in weight~1 just as in other weights. Hecke operators $T_p$ at primes $p$ dividing~$N$ may not be diagonalisable on $S_1(N,\chi)$, but they are diagonalisable on the new subspace. If $f=\sum a_nq^n\in S_1(N,\chi)$ is a normalised eigenform which is furthermore a newform of level~$N$, then a theorem of Deligne and Serre (Theorem~4.1 of~\cite{deligne-serre}) tells us that there is a continuous odd irreducible Galois representation
$$\rho_f:\GQ\to\GL_2(\C)$$
associated to~$f$ (by ``odd'' we mean that if $c$ is complex conjugation then the determinant of $\rho_f(c)$ is $-1$). In contrast to the higher weight case, the target here is a complex group rather than a $p$-adic one, and furthermore the image of $\rho_f$ is finite, so $\rho_f$ can be thought of as a faithful representation $\Gal(M_f/\Q)\to\GL_2(\C)$, where $M_f$ is a finite Galois extension of~$\Q$. The representation~$\rho_f$ has conductor~$N$, and in particular the extension $M_f/\Q$ is unramified outside~$N$. The representation $\rho_f$ is characterised by the following property: if $p\nmid N$ is prime, then the characteristic polynomial of $\rho_f(\Frob_p)$ is $X^2-a_pX+\chi(p)$. Here there is a choice: all Frobenii can either be arithmetic or geometric, and it does not matter which convention we use in this paper as long as we are consistent, so the reader may feel free to choose their favourite. The Cebotarev density theorem tells us that every conjugacy class in $\Gal(M_f/\Q)$ is of the form $\Frob_p$ for infinitely many $p\nmid N$, so in particular giving the characteristic polynomials of the $\Frob_p$ determines (and indeed highly overdetermines) the representation $\rho_f$. 

We also know the behaviour of $\rho_f$ at primes dividing~$N$. At these primes the relationship between the local behaviour of~$f$ and~$\rho_f$ is given by the local Langlands correspondence; this is because the construction of $\rho_f$ is known to satisfy local-global compatibility. Indeed the $L$-function of $\rho_f$ is equal to the Mellin transform of~$f$ by Theorem~4.6 of~\cite{deligne-serre}, and applying this result to~$f$ and its twists tells us enough about local~$L$ and~$\varepsilon$ factors to deduce the full local-global correspondence.\footnote{We remark that this local-global compatibility is still an open problem in the analogous situation of Hilbert modular forms for which some but not all weights are~1; see for example~\cite{newton} for the current state of the art.}

As a consequence of this argument we see that if~$f$ is a normalised newform then there is a finite cyclotomic extension~$\Q(\zeta)$ of $\Q$ such that $a_p\in\Q(\zeta)$ for all primes $p\nmid N$ -- indeed, for those $p$ we have that $a_p$ is a sum of two roots of unity. At primes~$p$ dividing the level the situation is even simpler -- either $a_p=0$ or $a_p$ is a root of unity (as $a_p$ is the trace of Frobenius on the subspace of the underlying 2-dimensional complex vector space fixed by an inertia subgroup at~$p$, by the local-global correspondence). The usual formulae relating $a_n$ for a general~$n$ to $a_p$ for $p$ prime apply in weight~1, and we deduce that the field generated by the coefficients of~$f$ lies in a cyclotomic field which by a standard argument (consider $a_p$ and $a_{p^2}$) contains $\Q(\chi)$, the number field generated by the image of $\chi$.

We can projectivise $\rho_f$ and obtain a projective Galois representation
$$\overline{\rho}_f:\GQ\to\PGL_2(\C).$$
The image of $\overline{\rho}_f$ is a finite subgroup of $\PGL_2(\C)$ and is hence either cyclic, dihedral (including the degenerate case of the non-cyclic group of order~4), or isomorphic to $A_4$, $S_4$ or $A_5$. (This argument goes back to Weber; the pre-image of a finite subgroup of $\PGL_2(\C)$ in $\SL_2(\C)$ stabilises a hermitian form so lives in $SU(2)$, which maps in a 2-to-1 manner onto $\SO_3(\R)$, and the finite subgroups of this can be classified via the Platonic solids.) Because $\rho_f$ is irreducible, the image of $\overline{\rho}_f$ cannot be cyclic (otherwise the image of $\rho_f$ would be a central extension of a cyclic group by a cyclic group and hence abelian), but the other cases do occur. We refer to $f$ in these cases as a dihedral form, an $A_4$ form, an $S_4$ form or an $A_5$ form respectively. In the next section we give an algorithm for determining, for a given newform~$f$, whether it is dihedral, $A_4$, $S_4$ or $A_5$. In the remainder of this section we prove some relatively straightforward lemmas, which are used in the proof of correctness of the algorithm.

\begin{lemma}\label{orders} (a) If $g\in\PGL_2(\C)$ has finite order~$n$, and $\tilde{g}\in\GL_2(\C)$ is any lift of $g$, then the complex number $c(\tilde{g})=\trace(\tilde{g})^2/\det(\tilde{g})$ is independent of the choice of $\tilde{g}$, and writing $c(g)$ for $c(\tilde{g})$ we have $c(g)=2+\zeta+\zeta^{-1}$ where $\zeta\in\C$ is a primitive $n$th root of unity.

(b) If $g$ has order 1,2,3,4 then $c(g)=4,0,1,2$ respectively. If $g$ has order~5 then $c(g)=\frac{3\pm\sqrt{5}}{2}$.
\end{lemma}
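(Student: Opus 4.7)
The plan is to prove (a) in three short steps (independence, eigenvalue analysis, explicit formula) and then simply substitute to prove (b).

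For independence, I would note that any two lifts of $g$ differ by a nonzero scalar $\lambda$: if $\tilde{g}' = \lambda\tilde{g}$, then $\trace(\tilde{g}')^2 = \lambda^2 \trace(\tilde{g})^2$ and $\det(\tilde{g}') = \lambda^2 \det(\tilde{g})$, so the ratio $\trace^2/\det$ is unchanged. Hence $c(g)$ is well-defined.

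For the formula, let $\alpha, \beta$ be the (complex) eigenvalues of a chosen lift $\tilde{g}$, which exist because we are working over $\C$. Since $g$ has order $n$ in $\PGL_2(\C)$, the matrix $\tilde{g}^n$ is a scalar, and moreover $n$ is the smallest positive integer for which this happens. Equivalently, $\alpha^n = \beta^n$ and $n$ is the smallest such positive integer, which means that $\zeta := \alpha/\beta$ is a primitive $n$-th root of unity. (Here I use that $\tilde{g}^n$ being scalar is equivalent to its two eigenvalues $\alpha^n, \beta^n$ being equal.) Then a direct computation gives
\[
c(g) \;=\; \frac{(\alpha+\beta)^2}{\alpha\beta} \;=\; \frac{\alpha}{\beta} + 2 + \frac{\beta}{\alpha} \;=\; 2 + \zeta + \zeta^{-1},
\]
which is the claimed formula. (If $\tilde{g}$ is not diagonalisable, $\alpha = \beta$, forcing $n=1$, and the formula still holds.)

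For (b), I would just substitute: $\zeta = 1, -1, e^{2\pi i/3}, i$ yield $c(g) = 4, 0, 1, 2$ respectively, using that $\zeta + \zeta^{-1} = -1$ when $n = 3$. For $n=5$, the two primitive fifth roots of unity give $\zeta + \zeta^{-1} = 2\cos(2\pi/5) = (\sqrt{5}-1)/2$ or $2\cos(4\pi/5) = -(\sqrt{5}+1)/2$, so $c(g) = (3\pm\sqrt{5})/2$, as claimed.

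There is essentially no obstacle: the only mildly subtle point is recognising that the order of $g$ in $\PGL_2(\C)$ matches the order of $\alpha/\beta$ as a root of unity, and handling the degenerate (non-diagonalisable) case by noting it forces $\alpha = \beta$ and hence $n=1$.
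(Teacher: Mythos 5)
Your proposal follows essentially the same route as the paper: independence of $c$ because two lifts differ by a scalar, the observation that the eigenvalue ratio $\alpha/\beta$ of a lift is a primitive $n$th root of unity, the computation $(\alpha+\beta)^2/\alpha\beta = 2+\zeta+\zeta^{-1}$, and (b) by substitution. The one slip is your dispatch of the degenerate case: a non-diagonalisable lift does \emph{not} force $n=1$. If $\tilde{g}$ is a nontrivial Jordan block then no power $\tilde{g}^k$ ($k\geq 1$) is scalar, even though its eigenvalues coincide, so $g$ has \emph{infinite} order in $\PGL_2(\C)$; the correct conclusion is that this case is excluded outright by the hypothesis that $g$ has finite order (this is exactly how the paper phrases it: $\tilde g$ must be diagonalisable, or else $g$ would have infinite order). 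The same caveat shows that your parenthetical equivalence ``$\tilde{g}^n$ scalar iff $\alpha^n=\beta^n$'' is valid only once diagonalisability is in hand, so the diagonalisability argument should come first rather than as an afterthought. Since the problematic case is vacuous under the hypothesis, your conclusion is unaffected, but the reasoning as written is wrong at that point and should be fixed.
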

\begin{proof} 
(a) Any two lifts of $g$ to $\GL_2(\C)$ differ by a non-zero scalar, from which it is easy to check that $c(\tilde{g})$ depends only on~$g$. If $\tilde{g}$ is any lift of~$g$ to $\GL_2(\C)$ then $\tilde{g}$ must be diagonalisable (or else~$g$ would have infinite order) and if the eigenvalues are $\lambda$ and $\mu$ then the order of~$g$ in $\PGL_2(\C)$ equals the multiplicative order of $\lambda/\mu$ in $\C^\times$. Because~$g$ has order~$n$, we must have $\lambda=\mu\zeta$ for $\zeta$ a primitive $n$th root of unity, and now everything follows from a direct calculation.

(b) This follows easily from (a).
\end{proof}

Here is a basic analysis of dihedral forms (by which we recall that we means forms~$f$ such that $\overline{\rho}_f$ is a dihedral group, including the degenerate case of the non-cyclic group of order~4). 

\begin{lemma}\label{dihedral} Let $f$ be a dihedral form, and suppose $\rho_f$ cuts out the extension $\Gal(M/\Q)$ for some number field~$M$. Then there exists a quadratic extension $K/\Q$ contained in~$M$ and a finite order character $\psi:\Gal(\overline{K}/K)\to\C^\times$ such that $\rho_f$ is the 2-dimensional representation induced from $\psi$. The trace of~$\rho_f$ is zero on every element of $\Gal(M/\Q)$ which is not in $\Gal(M/K)$.
\end{lemma}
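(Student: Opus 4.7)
The plan is to identify the quadratic field $K$ as the fixed field of a canonical index-two subgroup of $\Gal(M/\Q)$, then use the resulting reducibility of $\rho_f|_{\Gal(M/K)}$ together with Frobenius reciprocity to realise $\rho_f$ as an induction.

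Write $G=\Gal(M/\Q)$. Since $\overline{\rho}_f$ is faithful on $G/Z$ where $Z$ is the kernel of projection to $\PGL_2(\C)$, the quotient $\overline{\rho}_f(G)$ is a finite dihedral subgroup of $\PGL_2(\C)$ (possibly Klein four). It contains a cyclic subgroup of index~$2$; in the degenerate Klein four case I just pick any of the three index-two subgroups. Let $H$ be the preimage in $G$ of this cyclic subgroup, so $H$ is normal of index~$2$, and set $K=M^{H}$, a quadratic extension of~$\Q$ inside~$M$.

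Next I would show that $\rho_f|_H$ splits as a direct sum of two characters. The image $\rho_f(H)\subset\GL_2(\C)$ lies in the preimage of $\overline{\rho}_f(H)$, which is generated by a single non-scalar element~$g$ (with eigenvalues whose ratio is a root of unity, hence diagonalisable) together with the scalars; all of these commute and are simultaneously diagonalisable. Hence in a suitable basis $\rho_f|_H=\psi\oplus\psi'$ for two characters $\psi,\psi':H\to\C^\times$, which I inflate to finite-order characters of $\Gal(\overline{K}/K)$.

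Picking any $\sigma\in G\setminus H$, conjugation by $\sigma$ permutes the (at most two) distinct characters appearing in $\rho_f|_H$, so $\psi^\sigma\in\{\psi,\psi'\}$. If $\psi=\psi'$ then $\rho_f(H)$ consists of scalars, which forces $\overline{\rho}_f(H)$ to be trivial and $\overline{\rho}_f(G)$ to have order at most $2$, making $\rho_f(G)$ abelian and contradicting irreducibility of the two-dimensional $\rho_f$. If $\psi^\sigma=\psi$ then the $\psi$-eigenline in the underlying $\C^2$ is a unique $H$-stable line that is also $\sigma$-stable, hence $G$-stable, again contradicting irreducibility. Therefore $\psi\neq\psi'$ and $\psi'=\psi^\sigma$. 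By Frobenius reciprocity, $\psi$ appears in $\rho_f|_H$ and is not $G$-stable, so the irreducible induced representation $\Ind_{H}^{G}\psi$ is two-dimensional and agrees with $\rho_f$ upon restriction to~$H$; since any two-dimensional representation of~$G$ is determined by its restriction to the normal subgroup~$H$ together with the conjugation action of~$\sigma$, we conclude $\rho_f=\Ind_{K}^{\Q}\psi$.

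Finally, the trace statement is immediate from the standard induced character formula
\[
\trace\bigl(\Ind_{H}^{G}\psi\bigr)(g)=\sum_{\substack{xH\in G/H \\ x^{-1}gx\in H}}\psi(x^{-1}gx),
\]
because $H$ is normal in~$G$, so if $g\notin H$ then no conjugate of~$g$ lies in~$H$ and the sum is empty. The main conceptual step (and the only place one needs to be slightly careful) is the simultaneous diagonalisation argument together with ruling out $\psi=\psi^\sigma$ in the degenerate Klein four case; everything else is bookkeeping with the induced representation formalism.
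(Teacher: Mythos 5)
Your proof is correct and follows essentially the same route as the paper: take $K$ to be the fixed field of the preimage of the index-two cyclic subgroup of the dihedral projective image, note that $\rho_f$ restricted to $\Gal(M/K)$ lands in an abelian (simultaneously diagonalisable) group and so splits as two characters exchanged by the outer involution, and identify $\rho_f$ with the induction of $\psi$, with the trace vanishing off $\Gal(M/K)$ coming from the induced character formula. The only cosmetic point is that your closing appeal to a two-dimensional representation being ``determined by its restriction to $H$ together with the conjugation action of $\sigma$'' is unnecessary (and false as a general principle); the identification $\rho_f\cong\Ind_H^G\psi$ follows at once from the Frobenius reciprocity you already invoke together with the irreducibility of $\rho_f$ and a dimension count, which is exactly the paper's argument.
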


\begin{proof} The image of~$\overline{\rho}_f$ is dihedral and corresponds to a quotient $\Gal(L/\Q)$ of $\Gal(M/\Q)$. This dihedral group contains an index~2 cyclic subgroup, which by Galois theory corresponds to a quadratic extension~$K$ of~$\Q$ contained in~$L$ and hence in~$M$. The image of $\Gal(M/K)$ under $\overline{\rho}_f$ is cyclic, and hence the image of $\Gal(M/K)$ under $\rho_f$ is a central extension of a cyclic group by a cyclic group and hence abelian. By Schur's Lemma this means that the restriction of $\rho_f$ to $\Gal(M/K)$ is the sum of two characters $\psi$ and $\overline{\psi}$ (the conjugate of $\psi$ by the automorphism of $\Gal(M/K)$ induced by the non-trivial field automorphism of~$K$). Now because $\rho_f$ is irreducible it must (by Frobenius reciprocity) be the isomorphic to the representation of $\Gal(M/\Q)$ induced by $\psi$. A standard calculation then shows that the trace of $\rho_f$ vanishes outside $\Gal(M/K)$.
\end{proof}

\begin{lemma}\label{A4notS4} Suppose $f$ is an $S_4$ form and $\rho_f$ cuts out the extension $\Gal(M/\Q)$. Then there exists a quadratic extension $K/\Q$ contained within~$M$ such that for every element~$\sigma$ of $\Gal(M/\Q)$ which is not in $\Gal(M/K)$, the order of $\overline{\rho}_f(\sigma)$ is either~2 or~4.
\end{lemma}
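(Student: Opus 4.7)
The plan is to reduce the statement to an elementary group-theoretic fact about $S_4$, using the image of $\overline{\rho}_f$ to transfer the structure of $S_4$ back to $\Gal(M/\Q)$.

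First I would set up the relevant quotient. Since $f$ is an $S_4$ form, the image of $\overline{\rho}_f$ is (isomorphic to) $S_4$, and composing $\overline{\rho}_f$ with the sign character $\mathrm{sgn}\colon S_4\to\{\pm1\}$ yields a surjective homomorphism $\epsilon\colon\Gal(M/\Q)\to\{\pm1\}$. By Galois theory, $\ker(\epsilon)$ corresponds to a quadratic extension $K/\Q$ with $K\subset M$; this is the field I will take. By construction, an element $\sigma\in\Gal(M/\Q)$ lies in $\Gal(M/K)$ if and only if $\overline{\rho}_f(\sigma)$ is an even permutation, i.e.\ lies in the image of $A_4\subset S_4$.

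Next I would enumerate the odd elements of $S_4$. The non-identity elements of $S_4$ have cycle types $(2)$, $(2,2)$, $(3)$, $(4)$; the odd ones (those not in $A_4$) are exactly the transpositions and the 4-cycles, of orders $2$ and $4$ respectively. Thus for any $\sigma\in\Gal(M/\Q)\setminus\Gal(M/K)$, the element $\overline{\rho}_f(\sigma)\in S_4$ is a transposition or a 4-cycle, so has order $2$ or $4$, which is precisely what the lemma asserts.

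There is no real obstacle here; the only point that could conceivably need care is the identification of the image of $\overline{\rho}_f$ with $S_4$ (so that ``odd'' and ``even'' make unambiguous sense), but this is exactly what ``$f$ is an $S_4$ form'' was defined to mean in the section preceding the lemma, and the sign map is canonical on $S_4$. Hence the argument is essentially a one-line deduction from the cycle structure of $S_4$.
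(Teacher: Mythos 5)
Your proof is correct and is essentially the paper's own argument, which likewise deduces the lemma by Galois theory from the fact that every element of $S_4$ outside the index-$2$ subgroup $A_4$ has order $2$ or $4$; you have merely spelled out the quotient via the sign character and the cycle-type check. No further comment is needed.
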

\begin{proof} This follows immediately by Galois theory from the fact that every element of $S_4$ which is not in the index~2 subgroup~$A_4$ has order~2 or~4.
\end{proof}

\begin{lemma}\label{A4notA5} Suppose $f$ is an $A_4$ form and $f$ has character of order coprime to~5. Then the coefficient field of~$f$ does not contain $\sqrt{5}$. 
\end{lemma}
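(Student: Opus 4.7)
The plan is to show $\Q(f)\subseteq\Q(\zeta_L)$ for some integer $L$ coprime to $5$. Since $\sqrt{5}\in\Q(\zeta_5)\setminus\Q$ and $\Q(\zeta_5)\cap\Q(\zeta_L)=\Q$ whenever $\gcd(L,5)=1$, the conclusion is then immediate.

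Let $m$ be the order of $\chi$. First I would pin down the eigenvalues of $\rho_f(\sigma)$ for $\sigma\in\GQ$. Since $\rho_f$ has finite image, each such eigenvalue is a root of unity. Write the eigenvalues as $\lambda,\mu$. The ratio $\lambda/\mu$ is a root of unity whose order equals the order of the image of $\sigma$ in the projective image $A_4$; as $A_4$ has exponent $6$, this forces $\lambda/\mu\in\mu_6$. Meanwhile $\lambda\mu=\det\rho_f(\sigma)$, and by Chebotarev $\det\rho_f$ agrees with $\chi$ as a character of $\GQ$ (both are unramified outside $N$ and coincide on unramified Frobenii), so $\lambda\mu\in\mu_m$. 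Combining, $\lambda^2=(\lambda\mu)(\lambda/\mu)\in\mu_{\operatorname{lcm}(m,6)}$, and hence $\lambda,\mu\in\mu_L$ for $L:=2\operatorname{lcm}(m,6)$. (For $p\nmid N$ one also sees this directly from Lemma~\ref{orders}(b): $a_p^2/\chi(p)=c(\overline\rho_f(\Frob_p))\in\{0,1,4\}$ because $A_4$ has no element of order $\ge 4$, giving the even sharper bound $a_p\in\Q(\zeta_{2m})$.)

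Next I would promote this to all Fourier coefficients. The coefficient field $\Q(f)$ is generated by the primes $a_p$ together with the values of $\chi$. Values of $\chi$ sit in $\mu_m\subseteq\Q(\zeta_L)$; for $p\nmid N$ we have $a_p=\lambda+\mu\in\Q(\zeta_L)$; and for $p\mid N$ the local-global compatibility recalled in Section~2 identifies $a_p$ with $0$ or a single eigenvalue of $\rho_f(\Frob_p)$ acting on the inertia-invariant subspace, which is again in $\Q(\zeta_L)$. Hence $\Q(f)\subseteq\Q(\zeta_L)$. The hypothesis $\gcd(m,5)=1$ then gives $\gcd(L,5)=\gcd(2\operatorname{lcm}(m,6),5)=1$, which finishes the argument.

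The only real subtlety is handling $a_p$ at primes dividing $N$, which requires invoking local-global compatibility in order to view these coefficients intrinsically as invariants of $\rho_f$. Everything else rests on the elementary group-theoretic fact that $A_4$ contains no element of order $5$ --- precisely what prevents Lemma~\ref{orders} from producing a $c$-value of $(3\pm\sqrt{5})/2$ from the projective image.
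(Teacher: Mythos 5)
Your proof is correct and follows essentially the same route as the paper: both arguments use the $A_4$ projective image together with the order of the determinant character to show that every eigenvalue of every $\rho_f(\sigma)$ is a root of unity of order prime to $5$ (handling $p\mid N$ via local--global compatibility), and then conclude that the coefficient field lies in a cyclotomic field that cannot contain $\sqrt{5}$. The only cosmetic difference is that the paper phrases the last step as the coefficient field being unramified at $5$, whereas you make the cyclotomic bound $\Q(\zeta_{2\operatorname{lcm}(m,6)})$ explicit and use $\Q(\zeta_5)\cap\Q(\zeta_L)=\Q$; both are fine.
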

\begin{proof} Let $G$  be the image of $\overline{\rho}_f$, so~$G$ is a subgroup of $\PGL_2(\C)$ isomorphic to~$A_4$. The pre-image of~$G$ in the degree~2 cover $\SL_2(\C)$ of $\PGL_2(\C)$ is then a group $\tilde{G}$ of order~24, and if~$Z$ denotes the scalar matrices in $\GL_2(\C)$ then the image of $\rho_f$ must be contained within the group $Z\tilde{G}$. Furthermore, because the determinant of $\rho_f$ equals the character of~$f$ and in particular has order $d$ prime to~5, the image of $\rho_f$ must be contained within $\mu_{2d}\tilde{G}$ where $\mu_{2d}$ denotes the $2d$th roots of unity within $Z$.
In particular the image of $\rho_f$ must have order prime to~5. If $f=\sum a_nq^n$ then this means that each $a_p$ is a sum of at most~2 roots of unity of order prime to~5 (this is true even at the bad primes: by local-global compatibility the $a_p$ in this case equals the trace of Frobenius on the inertial invariants) and the field generated by the $a_p$ and the values of $\chi$ is hence unramified at~5.
\end{proof}

\begin{lemma}\label{sturm} Let $f$ and~$g$ be weight~1 modular forms with the same level~$N$ and character~$\chi$. Then $f=g$ if and only if the $q$-expansions of~$f$ and~$g$ agree up to and including the term $q^M$ with $M=\frac{1}{12}[\SL_2(\Z):\Gamma_0(N)]$ (the so-called ``Sturm bound'' for level~$N$).
\end{lemma}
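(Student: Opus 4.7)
My plan is to reduce to the classical Sturm bound for modular forms on $\Gamma_0(N)$ with trivial character, via the standard device of raising $h:=f-g$ to a suitable power in order to eliminate the character.

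Concretely, I would set $h=f-g\in M_1(\Gamma_1(N),\chi)$, so that the hypothesis ``the $q$-expansions of $f$ and $g$ agree up to and including $q^M$'' is equivalent to $h\in q^{M+1}\C[[q]]$; the goal is to show $h=0$. Letting $m$ be the order of $\chi$, the function $h^m$ is a holomorphic modular form on $\Gamma_1(N)$ of weight $m$ with character $\chi^m=1$, i.e.\ an element of $M_m(\Gamma_0(N))$, whose $q$-expansion lies in $q^{m(M+1)}\C[[q]]$.

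I would then invoke the classical Sturm bound: any $F\in M_k(\Gamma_0(N))$ whose $q$-expansion coefficients $a_n(F)$ vanish for all $n\leq \tfrac{k}{12}[\SL_2(\Z):\Gamma_0(N)]$ must itself be zero (this is proved via the valence formula on a fundamental domain for $\Gamma_0(N)$). Applied with $k=m$ and $F=h^m$, the vanishing of $h^m$ up through index $m(M+1)-1=\tfrac{m}{12}[\SL_2(\Z):\Gamma_0(N)]+(m-1)$ comfortably exceeds the weight-$m$ Sturm bound, forcing $h^m=0$; since $\C[[q]]$ is an integral domain, this gives $h=0$, as required.

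I do not anticipate any serious obstacle: the only external input is the classical Sturm bound for $\Gamma_0(N)$ with trivial character, and the rest---pushing up to trivial character via the $m$th power trick and checking that the vanishing order scales correctly by a factor of~$m$---is bookkeeping. The one point to note is that the argument loses nothing on the nose: $m(M+1)-1$ really is at least $\tfrac{m}{12}[\SL_2(\Z):\Gamma_0(N)]$ (indeed it exceeds it by $m-1\geq 0$), so no sharpening of the weight-$m$ Sturm bound is needed.
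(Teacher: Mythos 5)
Your proof is correct and coincides with the paper's own argument: the paper likewise raises $f-g$ to the power $d=\operatorname{ord}(\chi)$ to obtain a weight-$d$ form on $\Gamma_0(N)$ with trivial character, and then applies the valence/degree bound on $X_0(N)$ (phrased there via the degree of $\omega^d$, which is exactly the classical Sturm bound you invoke). The bookkeeping that $m(M+1)-1\geq \tfrac{m}{12}[\SL_2(\Z):\Gamma_0(N)]$ is the same in both versions.
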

\begin{proof} This follows from Corollary~1.7 of~\cite{buzzard-stein} (applied with $I$ equal to the empty set), but it is a standard argument and we sketch it here. Conceptually what is going on is that $f$ and~$g$ are sections of a line bundle $\omega$ on the modular curve $X_1(N)$, so if their $q$-expansions agree up to a point greater than the degree of $\omega$ then their difference is a holomorphic section of $\omega$ with a zero of such a large degree that it forces the difference to be zero. Actually this is computationally ineffective because the degree of $\omega$ scales as $N^2$. The way to obtain the lemma with the constants as stated is to apply that argument to $(f-g)^d$ where $d$ is the order of $\chi$; this is a weight~$d$ form of level $\Gamma_0(N)$ and an explicit computation of the degree of $\omega^d$ on $X_0(N)$ gives the result.
\end{proof}
\section{Computing the projective image.}

Let $f=\sum_{n\geq1}a_nq^n$ be a normalised cuspidal newform of weight~1, level~$N$ and character~$\chi$, and let $\rho_f:\Gal(M/\Q)\to\GL_2(\C)$ (assumed injective) be the associated Galois representation. The Cebotarev density theorem ensures that each conjugacy class in $\Gal(M/\Q)$ equals the Frobenius conjugacy class $\Frob_p$ for infinitely many primes~$p$, so we know that for any $\sigma\in\Gal(M/\Q)$ the characteristic polynomial of $\rho_f(\sigma)$ will equal $X^2-a_pX+\chi(p)$ for infinitely many primes~$p$. We are interested in this section in computing the isomorphism class of $\overline{\rho}_f(\Gal(M/\Q))$ and in particular whether it is dihedral or isomorphic to $A_4$, $S_4$ or~$A_5$. In practice it is relatively easy to \emph{guess} the answer, simply by looking at the $q$-expansion of~$f$. If about half of the coefficients $a_p$ ($p$ prime) equal zero then~$f$ will probably be dihedral. If not, then $\overline{\rho}_f(\sigma)$ has order at most~5 for all $\sigma\in\Gal(M/\Q)$ and if $\sigma=\Frob_p$ then the invariant $c(\sigma)$ of Lemma~\ref{orders} equals $a_p^2/\chi(p)$. The numbers $a_p^2/\chi(p)$ can easily be computed for the first few hundred primes~$p\nmid N$, and $a_p^2/\chi(p)$ tells us the order of $\overline{\rho}_f(\Frob_p)$ by Lemma~\ref{orders}(b). If we see only elements of order at most~3 then $f$ is probably an $A_4$ form, if we see an element of order~4 then $f$ is an $S_4$ form (as neither~$A_4$ nor~$A_5$ have elements of order~4) and if we see an element of order~5 then $f$ is an $A_5$ form (as neither $A_4$ nor $S_4$ have elements of order~5). Note that 25\% of the elements of~$S_4$ have order~4 and 40\% of the elements of~$A_5$ have order~5, so by the Cebotarev density theorem it should be very easy in practice to find such elements and we would only expect to have to try a few primes.

To make this algorithm rigorous we see that we have to solve two problems. Firstly, we need to be able to rigorously decide whether or not a form~$f$ is dihedral. Secondly, we need to have a way of proving that a form which we know to be non-dihedral and which we suspect of being an $A_4$ form, is actually an $A_4$ form (we can deal with $S_4$ and $A_5$ rigorously by finding elements of order~4 and~5 respectively, once we have proven that the form is not dihedral). We deal with these two issues separately below.

\subsection{An algorithm to decide whether or not a form is dihedral.}

Here is a method which rigorously decides whether or not a form $f=\sum_n a_nq^n$ of level~$N$ and character~$\chi$ is dihedral. By Lemma~\ref{dihedral}, if~$f$ is dihedral then there exists some quadratic extension $K/\Q$ contained in~$M$ (and hence unramified outside~$N$) such that the trace of $\rho_f(\sigma)$ is zero whenever $\sigma\in\Gal(M/\Q)$ does not fix~$K$. Hence there is a quadratic extension $K/\Q$ unramified outside~$N$ such that for every prime $p\nmid N$ which is inert in~$K$, $a_p=0$. Thus if we suspect that~$f$ is \emph{not} dihedral, an algorithm for proving that it is not dihedral is the following. List all quadratic extensions $K/\Q$ unramified outside~$N$ (there are of course only finitely many) and for each one, search for a prime $p\nmid N$ which is inert in~$K$ and for which $a_p\not=0$. If for each $K$ we can find such a prime then we have proved that~$f$ is not dihedral. One can check using the Cebotarev density theorem that such primes will be extremely easy to find if~$f$ is not dihedral, as $A_4$, $S_4$ and $A_5$ only have at most $9/24$ of their elements of order~2, so in practice one expects only to have to check a few primes before one finds an example.

Next we need to give an algorithm for proving that a form that we suspect \emph{is} dihedral, is actually dihedral. By Lemma~\ref{dihedral}, if~$f$ is dihedral then $\rho_f$ is induced from a character $\psi$ of $\Gal(\overline{K}/K)$ for some $K/\Q$ unramified outside~$N$. Conversely, if $\psi$ is a character of $\Gal(\overline{\Q}/K)$ for some quadratic extension~$K$ of~$\Q$ then its induction to $\Gal(\overline{\Q}/\Q)$ is a 2-dimensional representation. If furthermore this representation is irreducible and odd, then it comes from a modular form; this is a special case of Artin's conjecture (a theorem of Khare and Wintenberger) but this case has been known for far longer and a convenient reference is Theorems~4.8.2 and~4.8.3 of~\cite{miyake}, which furthermore shows how to explicitly recover the level, the character, and the $q$-expansion of the form from~$\psi$. Note in particular from these theorems that the level of the newform giving rise to the induced representation equals the product of the discriminant of~$K$ and norm of the conductor of~$\psi$.

So here is our algorithm, to prove that a suspected-dihedral form really is dihedral: we loop over quadratic fields~$K$ unramified outside~$N$ and for each such field we loop over all characters~$\psi$ of $\Gal(\overline{K}/K)$ with conductor of norm equal to $N/\disc(K)$. For each such character $\psi$ having the additional properties that $\psi$ is not equal to its Galois conjugate (this is to ensure that the induced representation is irreducible) and that the induced representation is odd (this is automatic if $K$ is imaginary, and in the real case it is a condition at infinity) we compute the $q$-expansion of the corresponding weight~1 form (using Theorems~4.8.2 and~4.8.3 of~\cite{miyake}). All of these $q$-expansions are guaranteed to be weight~1 modular forms. Eventually we will find a $q$-expansion of a form with the same character as that of~$f$, such that the $q$-expansion looks equal to the $q$-expansion of~$f$ to the degree of accuracy that we have computed it. We now need to prove that these forms are equal, and we do this by computing the $q$-expansions of the forms up to the Sturm bound and applying Lemma~\ref{sturm}.

There are other approaches to showing that a suspected-dihedral form is dihedral. For example, if $f$ is a modular form and we have found a quadratic extension $K/\Q$ such that $a_p$ seems to be 0 for all of the $p\nmid N$ such that $p$ is inert in $K$ (in the sense that we check this for hundreds of primes and it is true in every case) then we could let $\xi$ denote the quadratic Dirichlet character corresponding to~$K$ by class field theory and then verify that the newform corresponding to $f\otimes\xi$, that is the newform corresponding to $\sum a_n\xi(n)q^n$, equalled~$f$. The problem with this method is that we would verify the equality using Lemma~\ref{sturm}, and $f\otimes\xi$ could in theory have level $Nd^2$ with $d$ the conductor of $\xi$, and if~$d$ is large then it makes the Sturm bound much larger and hence demands the computation of far more $q$-expansion coefficients. This twisting method might be appropriate if the explicit class field theory calculations had turned out to be difficult, but magma had no problem at all with these class field calculations because in practice it was only having to work with quadratic fields with discriminant of order of magnitude at most~1500. Of course implementing the class field theory approach was far harder, however this implementation was already part of the algorithm used to compute the space of weight~1 forms in the first place and so in practice it was there already.

\subsection{Verifying that a probably-$A_4$ form is an $A_4$ form.}

The only remaining problem is the following. Suppose we have a form~$f$ which we have proved is non-dihedral and we suspect is an $A_4$ form. We need to rule out~$f$ being of types~$S_4$~or~$A_5$. We have computed $a_p^2/\chi(p)$ for many unramified primes~$p$ and hence the order of $\overline{\rho}_f(\Frob_p)$ for many primes~$p$. We have found no Frobenii with order~4 or~5. How can we \emph{prove} that~$f$ is an $A_4$ form, rather than there being a Frobenius element of order~4 or~5 just around the corner? Of course one could attempt to use an explicit form of the Cebotarev density theorem, but this would involve computing things about the number field cut out by the projective representation and the problem is that we do not provably know what this number field is at this point; indeed we do not even know its Galois group over~$\Q$ at this point in the argument, or even its degree. We resolved this problem in a completely different way which turned out to also be far more efficient.

Because~$f$ is known to be non-dihedral, to prove that it is an~$A_4$ form all we have to do is to prove that it is not an~$S_4$ form or an~$A_5$ form. Here are the two tricks.

Proving that a suspected-$A_4$ form~$f$ not actually an~$A_5$ form is usually easy in practice. By the Cebotarev density theorem, if~$f$ is an~$A_5$ form then there really will be primes~$p\nmid N$ such that $a_p^2/\chi(p)=\frac{3\pm\sqrt{5}}{2}$ and in particular the coefficient field of~$f$ must contain~$\sqrt{5}$. In our range of computation it turned out that this observation already did the job -- for all $A_4$ forms of level~$N\leq1500$ the coefficient field did not contain $\Q(\sqrt{5})$. However there are $A_4$ forms whose field of coefficients contains~$\sqrt{5}$ -- for example take an $A_4$ form and twist it by a Dirichlet character to ensure that it has character of order a multiple of~5; then the coefficient field of the twisted form will contain $\Q(\zeta_5)$ and this contains $\sqrt{5}$. We can fix this problem however, were it ever to occur, using the following trick: if $f$ is any eigenform then some twist $f'=f\otimes\xi$ of~$f$ will have character $\chi'$ of order coprime to~5 (if $f$ has character $\chi$ of order $5^de$ with $d\geq1$ and $5\nmid e$ then twist $f$ by $\chi^{(5^d-1)/2}$ for example). The coefficient field of $f'$ is certainly contained in the compositum~$L$ of $\Q(\xi)$ and the coefficient field of~$f$, but it might be smaller than this. We can compute the coefficient field explicitly however. For it is contained within the Galois closure~$L'$ of the field generated by the image of $\chi'$ and the first $M$ terms in the $q$-expansion of~$f'$, where $M$ is the Sturm bound for level $N\cond(\xi)^2$. The level of $f'$ divides $N\cond(\xi)^2$, and if the true coefficient field contains an element not in~$L'$ then we can find an automorphism of $\overline{\Q}$ which fixes~$L'$ and $\chi'$ but sends $f'$ to $f''\not=f'$; this however contradicts Lemma~\ref{sturm}. 

The point of this twisting is that if $f$ really is an $A_4$ form, then applying Lemma~\ref{A4notA5} to $f'$ we see that $L$ will be unramified at~5, and conversely if~$f$ is an $A_5$ form then $L$ must be ramified at~5, so this test if run on an $A_4$ form is guaranteed to prove that it is not an $A_5$ form.

It is hard to comment on the effectiveness of this algorithm because we never had to use it; the coefficient field of~$f$ never had $\sqrt{5}$ in the $A_4$ cases in the range that we computed. However given that we computed the first 10,000 coefficients of the $q$-expansions of all the forms of level at most~1500 one should be optimistic that our suggested algorithm will succeed in practice if it is ever needed.

Our final task is to give an algorithm which proves that a suspected-$A_4$ form is provably not an $S_4$ form. This caused us some trouble for a while, however the argument we ultimately found is very short. Here is how it works. Let us say that actually~$f$ is an $S_4$ form. Then by Lemma~\ref{A4notS4} and Lemma~\ref{orders} there is a quadratic extension $K/\Q$ unramified outside~$N$ such that for each $p\nmid N$ which is inert in~$K$, we have $a_p^2/\chi(p)\in\{0,2\}$. So we simply loop through all such quadratic fields and for each field we find an unramified inert prime~$p$ which does not have this property. If~$f$ really is an $A_4$ form then again the Cebotarev density theorem guarantees that for each~$K$ this will happen very quickly, and we should only have to check a few primes. Our description of the algorithm is complete.

\bibliographystyle{amsalpha}

\providecommand{\bysame}{\leavevmode\hbox to3em{\hrulefill}\thinspace}
\providecommand{\MR}{\relax\ifhmode\unskip\space\fi MR }
\providecommand{\MRhref}[2]{%
  \href{http://www.ams.org/mathscinet-getitem?mr=#1}{#2}
}
\providecommand{\href}[2]{#2}

\end{document}